\theoremstyle{plain}
\newtheorem{thm}{Theorem}[section]
\newtheorem*{mt*}{Main Theorem}
\newtheorem*{cj*}{Conjecture}
\newtheorem*{nt*}{Notations}
\newtheorem*{qs*}{Question}
\newtheorem{prop}[thm]{Proposition}
\newtheorem{lemma}[thm]{Lemma}
\newtheorem{cor}{Corollary}
\newtheorem{rem}{Remark}
\newtheorem{example}{Example}[section]
\theoremstyle{definition}
\newtheorem*{definition}{Definition}
\newcommand{\ideal}[1]{\mathfrak{#1}}
\newcommand{\m}{\ideal{m}}
\newcommand{\p}{\ideal{p}}
\newcommand{\func}[1]{\mathrm{#1} \,}
\newcommand{\Ass}{\func{Ass}}
\newcommand{\Hom}{\func{Hom}}
\newcommand{\Ann}{\func{Ann}}
\newcommand{\Char}{\func{Char}}
\newcommand{\V}{\func{V}}
\newcommand{\T}{\func{T}}
\newcommand{\Z}{\func{Z}}
\newcommand{\Spec}{\func{Spec}}
\newcommand{\Comp}{\func{Cf}}
\newcommand{\Fin}{\func{Fin}}
\newcommand{\NN}{{\mathbb N}}
\newcommand{\ZZ}{{\mathbb Z}}
\title[]{Smooth Algebra and Finiteness of the Set of Associated Primes of Local Cohomology Modules
}
\author[]{Rajsekhar Bhattacharyya
}
\address{Dinabandhu Andrews College, Garia, Kolkata 700084, India
}
\email{rbhattacharyya@gmail.com
}
\keywords{Local Cohomology}
\subjclass[2010]{13D45}
\begin{document}

\begin{abstract}
In this article, we study the behaviour of smooth algebra $R$ over local Noetherian local ring $A$. At first, we observe that for every $f\in R$, $R_f$ has finite length in the category of $D(R,A)$-module if dimension of $A$ is zero. This extends the result of Theorem 2 of \cite{Ly3}. We use this fact to generalize the result of Theorem 4.1 of \cite{BBLSZ}, from the finiteness of the set of associated primes of local cohomology module to that of Lyubeznik functor. Finally, we introduce the definition of $\Sigma$-finite $D$-modulue for smooth algebra and we extend the result of Theorem 1.3 of \cite{Nu3} from polynomial and power series algebra to smooth algebra. Theorem 1.3 of \cite{Nu3} comes out as a partial answer to a question raised by Melvin Hochster. Thus, we extend the partial answer to the above question from polynomial and power series algebra to smooth algebra over an arbitrary Noetherian local ring.
\end{abstract}

\maketitle

\section{introduction}
Throughout this paper $A$ and $R$ always denote commutative Noetherian rings with unity. Let $M$ be an $R$-module and $I\subset R$ be an ideal. Then, for every integer $i\geq 0$, we denote the $i$-th local cohomology of $M$ with support in $I$ by $H^i_I (M)$.
Although, in general, the local cohomology modules are not finitely generated, but the set of associated primes of $H^i_I(R)$ is finite for certain regular rings. Huneke and Sharp proved this for characteristic $p>0$ \cite{HS}. Lyubeznik showed this finiteness property for regular local rings of equal characteristic zero and finitely generated regular algebras over a field of characteristic zero \cite{Ly1}. Recently, in \cite{BBLSZ}, the finiteness of local cohomology module is proved for smooth algebra over $\ZZ$ and $\ZZ_{p\ZZ}$ and from results of that paper, it is evident that smooth algebra can play a crucial role in the study of local cohomology modules, even if the dimension of the base ring is more than one. 

In studying the finiteness of local cohomology module, $D$-module plays an important role, see \cite{Ly1, Ly2, Ly3} and also \cite{BBLSZ}. One of the important property of smooth $A$-algebra $R$, which resemblances polynomial and power series algebra, is that the ring $D(R,A)$ of $A$-linear differential operators of $R$, commutes with the base change, see Lemma 2.1 of \cite{BBLSZ}. In this article, we study the behaviour of smooth algebra over local ring and in this context, we generalize certain the results of \cite{Nu2}, and \cite{Nu3} from the polynomial and power series algebra to smooth algebra over a Noetherian local ring. In this context, it is to be noted that Lemma 2.1 of \cite{BBLSZ} plays a very important role in this generalization. 

For a field $A$, and for $A$-algebra $R$, $R_f$ has finite length in the category of $D(R,A$-modules for every $f\in R$, if $R$ satisfies the conditions of Theorem 2 of \cite{Ly3}. In Lemma 3.2 of \cite{Nu2}, the result of Theorem 2 of \cite{Ly3}, is extended from polynomial or power series algebra over a field to those algebras over zero dimensional ring. In section 3, at first in Theorem 3.2, we extend the result of Theorem 2 of \cite{Ly3} further, from polynomial or power series algebra over a field to smooth algebra over zero dimensional ring. Later, in Theorem 3.5, we use this fact to generalize the result of Theorem 4.1 of \cite{BBLSZ} (see Corollary 1), from the finiteness of the set of associated primes of local cohomology to that of Lyubeznik functor.  

In section 4, we study the following question raised by Melvin Hochster (see, Question 1.1 of \cite{Nu3}): 

\begin{qs*}
Let $(A,\m,k)$ be a local ring  and $R$ be a flat extension with regular closed fibre. Is $\Ass_R H^0_{\m R}H^i_I(R)$ finite for every ideal $I\subset R$ and $i\in \NN?$ 
\end{qs*}

In Theorem 1.3 of \cite{Nu3}, it is proved that the answer is positive when $R$ is either a polynomial or a power series ring over a Noetherian local ring $A$ and for an ideal $I\subset R$ such that $\dim(A/I\cap A)\leq 1$. In doing so, definition of $\Sigma$-finite $D$-module is introduced for polynomial or power series algebra. From definition of smoothness it is immediate that every closed fibres are regular. So, we can expect to lift the result of Theorem 1.3 of \cite{Nu3} to smooth algebra. So, at first, we introduce the definition of $\Sigma$-finite $D$-module for smooth algebra and then in Theorem 4.4, we extend the result and we get the partial answer to the above question, when $R$ is a smooth $A$-algebra.
\section{basic results}

Here we briefly review some of the basic results which we need in the next sections.

\subsection{Lyubeznik Functor}

Lyubeznik Functor is introduced by Lyubeznik in \cite{Ly1}. Here, we account for a brief description of it: Let $R$ be a Noetherian ring and $M$ be an $R$-module. Let $\Z$ be a closed subset of $\Spec R$. We set $H^i_{\Z} (M)$ as the $i$-th local cohomology module of $M$ with support in $\Z$. We notice that $H^i_{\Z} (M)=H^i_I (M)$, for $\Z=\V(I)=\{P\in\Spec R: I\subset P\}$. For any two closed subsets of $\Spec R$, $\Z_1\subset \Z_2$, there is a long exact sequence of functors
$$\ldots\to H^i_{\Z_1}\to H^i_{\Z_2}\to H^i_{\Z_1/\Z_2}\to \ldots$$
We set $\T =\T_1\circ \dots\circ\T_t$, where every functor $\T_j$ is either $H^i_{\Z}$ for some closed subset $\Z$ of $\Spec R$ or the kernel or image (or cokernel) of some map in the above long exact sequence. $\T$ is known as Lyubeznik functor.

We note the following well known fact for Lyubeznik functor.

\begin{rem}
Let $S$ be faithfully flat over $R$. Then, for Lyubeznik functor $\T$, $\Ass_{S} \T(S)$ is a finite set if and only if $\Ass_{R} \T(R)$ is a finite set.\newline
\end{rem}

\subsection{Introduction to D-modules}

Consider Noetherian rings $A$ and $R$, such that $A\subset R$, and we denote by $D(R,A)$ the ring of $A$-linear differential operators of $R$. This is the subring of $\Hom_A(R,R)$ defined inductively as follows: The differential operators of order zero are the homomorphisms induced by multiplying by elements in $R$. An element $\theta\in\Hom_A(R,R)$ is a differential operator of order less than or equal to $k+1$ if $\theta\cdot r -r\cdot\theta$ is a differential operator of order less than or equal to $k$ for every $r\in R=\Hom_R(R,R)$. 

It is well known that if $M$ is a $D(R,A)$-module, then $M_f$ has the structure of a $D(R,A)$-module  such that, for every $f \in R$, the natural morphism $M\to M_f$ is a morphism of $D(R,A)$-modules. As a result of this, since $R$ is a $D(R,A)$-module, $\T(R)$ is also a $D(R,A)$-module (see, Examples $2.1$ in \cite{Ly1}).

If $R=A[x_1,\ldots,x_n]$ or $R=A[[x_1,\ldots,x_n]]$, then $\frac{1}{t!} \frac{\partial^t }{\partial x_i ^t}$ can be viewed as a differential operator on $R$ where the integer $t!$ is not invertible. By Theorem $16.11.2$ in \cite{EGA}, in each of these cases, $D(R,A)$ is the free $R$-module with basis $\frac{1}{{t_1}!} \frac{\partial^{t_1} }{\partial x_1 ^{t_1}}\ldots\frac{1}{{t_n}!} \frac{\partial^{t_n} }{\partial x_n ^{t_n}}$, where for each $i= 1$ to $n$, $t_i\in \NN$. Thus, it follows that for every $A$-algebra $B$, $D(R,A)\otimes_A B\cong D(R\otimes_A B, B)$. In particular, for an ideal $I\subset A$, we have the isomorphism $D(R,A)/ID(R,A)\cong D(R/IR,A/IA)$. Moreover, if $M$ is a $D(R,A)$-module, then $IM$ is a $D(R,A)$-submodule and the last isomorphism gives the structure of $M/IM$ as a $D(R,A)$-module. 

In \cite{Ly3}, we observe a definition of a subcategory, denoted by $C(R,A)$, which is the smallest subcategory of $D(R,A)$-modules that contains $R_f$ for all $f\in R$ and that is closed under subobjects, extensions, and quotients. In particular, the kernel, image, and cokernel of a morphism of $D(R,A)$-modules that belong to $C(R,A)$ are also objects in $C(R,A)$. 

\subsection{D-modules of Finite Length}

A $D(R,A)$-module $M$ is simple, if its only $D(R,A)$-submodules are $0$ and $M$. We say that a $D(R,A)$-module $M$ has finite length, if there is a strictly ascending chain of $D(R,A)$-modules, 
$0\subset M_0 \subset M_1\subset \ldots \subset M_h =M,$
called a composition series of $M$,
such that $M_{i+1}/M_i$ is a nonzero simple $D(R,A)$-module for every $i=0,\ldots, h$. Here, $h$ is independent of the filtration and it is called the length of $M$. Moreover, for every filtration, the composition factors, $M_{i+1}/M_i,$ are the same, up to permutation and isomorphism.

\begin{nt*}
If $M$ is a $D(R,A)$-module of finite length, we denote the set of its composition factors by $\Comp{M}$.
\end{nt*}

\begin{rem}\label{RemFinLen}
(a) If $M$ is a nonzero simple $D(R,A)$-module, then $M$ has only one associated prime. This is because for every prime ideal $P\subset \Ass_R M$, $H^0_P(M)$ is a nonzero $D(R,A)$-submodule of $M$ and so it is the whole of $M$. As a consequence, if $M$ is a $D(R,A)$-module of finite length, then $\Ass_R M\subset \bigcup_{N\in\Comp{M}} \Ass_R N,$ which is finite. 

(b) If $0\to M'\to M\to M''\to 0$  is a short exact sequence of $D(R,A)$-modules of finite length, then $\Comp{M}=\Comp{M'}\bigcup \Comp{M''}$.
\end{rem}

We recall that, for a field $A$, and for $A$-algebra $R$, $R_f$ has finite length in the category of $D(R,A)$-modules for every $f\in R$, if $R$ satisfies the conditions of Theorem 2 of \cite{Ly3}. In this context, we note the following result.

\begin{lemma}
(a) If $M$ is an object in $C(R,A)$, then $\T(M)$ is also an object in this subcategory; in particular, $\T(R)$ belongs to $C(R,A)$\newline
(b) For $A$-algebra $R$, assume that $R_f$ has finite length in the category of $D(R,A)$-modules for every $f \in R$ and $M$ is an object of $C(R,A)$, then $M$ has finite length as a $D(R,A)$-module. As a consequence, $\T(R)$ would also have finite length.
\end{lemma}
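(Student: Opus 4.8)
The plan is to exploit the defining property of $C(R,A)$ as the \emph{smallest} subcategory of $D(R,A)$-modules that contains every $R_f$ and is closed under subobjects, extensions, and quotients. Both parts follow the same template: to show that every object of $C(R,A)$ enjoys a property $P$, I would introduce the class $\mathcal{C}_P$ of modules satisfying $P$, verify that $\mathcal{C}_P$ contains each $R_f$ and is closed under the three operations, and then conclude $C(R,A)\subseteq\mathcal{C}_P$ by minimality.

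For part (a), the first and crucial step is to show that $C(R,A)$ is closed under localization. For this I would set $\mathcal{D}$ to be the class of $D(R,A)$-modules $M$ with $M_g\in C(R,A)$ for every $g\in R$. It contains each $R_f$ because $(R_f)_g=R_{fg}$, and since localization is exact and carries $D(R,A)$-morphisms to $D(R,A)$-morphisms, the closure of $\mathcal{D}$ under subobjects, quotients, and extensions is inherited term-by-term from the corresponding closure of $C(R,A)$ (for instance, if $N\subseteq M$ with $M\in\mathcal{D}$, then $N_g\hookrightarrow M_g$ exhibits $N_g$ as a subobject of an object of $C(R,A)$). Minimality then gives $C(R,A)\subseteq\mathcal{D}$. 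Next, for $M\in C(R,A)$ and $I=(f_1,\dots,f_s)$, I would compute $H^i_I(M)$ as the cohomology of the \v{C}ech complex whose terms are the localizations $M_{f_{i_1}\cdots f_{i_k}}$. All such terms lie in $C(R,A)$ by the localization step, the \v{C}ech differentials are $D(R,A)$-morphisms because the structural maps $M_g\to M_{gh}$ are, and $H^i_I(M)$, being a subquotient (a kernel modulo an image) of these terms, is therefore again in $C(R,A)$. Finally, writing $\T=\T_1\circ\cdots\circ\T_t$, each $\T_j$ is either some $H^i_{\Z}=H^i_I$, handled by the \v{C}ech argument, or a kernel, image, or cokernel of a $D(R,A)$-morphism, handled by the closure already recorded in the excerpt; an induction on $t$ yields $\T(M)\in C(R,A)$. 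Taking $M=R=R_1$ gives $\T(R)\in C(R,A)$.

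For part (b), I would let $\mathcal{F}$ be the class of $D(R,A)$-modules of finite length. By the standing hypothesis each $R_f$ lies in $\mathcal{F}$, and finite length is preserved under subobjects and quotients (the length can only drop) and under extensions (length is additive on a short exact sequence), so $\mathcal{F}$ satisfies the three closure conditions. Minimality again gives $C(R,A)\subseteq\mathcal{F}$, so every object of $C(R,A)$ has finite length. Combining with part (a), $\T(R)\in C(R,A)\subseteq\mathcal{F}$, which yields the stated consequence.

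I expect the substantive work to be concentrated in part (a), specifically the localization-closure lemma together with the identification of local cohomology with \v{C}ech cohomology inside the category of $D(R,A)$-modules; part (b) is then purely formal. The one point demanding care is that every map in sight---the canonical localization maps, the \v{C}ech differentials, and the connecting maps of the long exact sequence---is a morphism of $D(R,A)$-modules, since it is precisely this that allows the kernels, images, and cokernels to be taken in $C(R,A)$ rather than merely in the category of $R$-modules.
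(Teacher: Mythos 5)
Your proof is correct and takes essentially the same route as the paper, which simply defers to Lemma 5 and Corollary 6 of \cite{Ly3}; your minimality-plus-closure arguments (closure of $C(R,A)$ under localization, the \v{C}ech-complex computation of $H^i_I$, and the finite-length class $\mathcal{F}$) are exactly the content of those cited proofs.
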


\begin{proof}
(a) Proof is similar to that of Lemma 5 of \cite{Ly3}.

(b) Proof is similar to that of Corollary 6 of \cite{Ly3}.
\end{proof}

\subsection{Smooth Algebra}

We recall the definition of regular algebras, (see \cite{CAofM}, page 249). Here, we call finitely presented regular algebra over the base ring as smooth algebras. More precisely: A ring $R$ is said to be smooth over $A$ if $R$ is a finitely presented and flat $A$-algebra, such that for each prime ideal $\p$ of $A$, the fiber $R_{\p}/\p R_{\p}$ is geometrically regular over $A_{\p}/\p A_{\p}$. We have already seen that if $R=A[x_1,\ldots,x_n]$ or $R=A[[x_1,\ldots,x_n]]$, then for an $A$-algebra $B$, it follows that $D(R,A)\otimes_A B= D(R\otimes_A B, B)$. Recently in Lemma 2.1 of \cite{BBLSZ}, this result is been extended to $R$ when it is smooth over $A$. 

Here we mention the known results about the behaviour of smooth algebra under base change of finite type.

\begin{lemma}
Let $R$ be a smooth $A$-algebra, and let $A\rightarrow \bar{A}$ be a base change of finite type. Then $R\otimes_A \bar{A}$ is also smooth over $\bar{A}$. 
\end{lemma}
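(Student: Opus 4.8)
The plan is to verify the three defining properties of smoothness for $R\otimes_A\bar{A}$ over $\bar{A}$ in turn: finite presentation, flatness, and geometric regularity of the fibres. The first two are formal consequences of the corresponding properties of $R$ over $A$ together with the stability of these notions under base change, so the substance of the argument lies in the fibre condition.

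First I would record that if $R=A[x_1,\ldots,x_n]/(f_1,\ldots,f_m)$ exhibits $R$ as a finitely presented $A$-algebra, then $R\otimes_A\bar{A}\cong\bar{A}[x_1,\ldots,x_n]/(\bar{f}_1,\ldots,\bar{f}_m)$, where $\bar{f}_j$ is the image of $f_j$, which is finitely presented over $\bar{A}$. Since $A\to\bar{A}$ is of finite type and $A$ is Noetherian, $\bar{A}$ is Noetherian by the Hilbert basis theorem, so $R\otimes_A\bar{A}$ stays within the Noetherian setting of the paper. For flatness, I would use that for any $\bar{A}$-module $N$ there is a natural isomorphism $(R\otimes_A\bar{A})\otimes_{\bar{A}}N\cong R\otimes_A N$ of $A$-modules; since $R$ is flat over $A$ the functor $N\mapsto R\otimes_A N$ is exact, whence $R\otimes_A\bar{A}$ is flat over $\bar{A}$.

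The heart of the proof is the fibre condition. Fix a prime $\bar{\p}\subset\bar{A}$ and let $\p$ be its contraction along $A\to\bar{A}$, so that the residue field $\kappa(\bar{\p})$ is a field extension of $\kappa(\p)=A_\p/\p A_\p$. The fibre of $R\otimes_A\bar{A}$ over $\bar{\p}$ is computed by
$$(R\otimes_A\bar{A})\otimes_{\bar{A}}\kappa(\bar{\p})\cong R\otimes_A\kappa(\bar{\p})\cong (R\otimes_A\kappa(\p))\otimes_{\kappa(\p)}\kappa(\bar{\p}).$$
To check that this fibre is geometrically regular over $\kappa(\bar{\p})$, I would take an arbitrary field extension $L$ of $\kappa(\bar{\p})$ and observe that
$$\bigl((R\otimes_A\bar{A})\otimes_{\bar{A}}\kappa(\bar{\p})\bigr)\otimes_{\kappa(\bar{\p})}L\cong R\otimes_A L\cong (R\otimes_A\kappa(\p))\otimes_{\kappa(\p)}L.$$
Viewing $L$ as a field extension of $\kappa(\p)$ through the tower $\kappa(\p)\subset\kappa(\bar{\p})\subset L$, the right-hand side is the base change of the fibre $R\otimes_A\kappa(\p)$ to a field extension of $\kappa(\p)$. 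Because $R$ is smooth over $A$, that fibre is geometrically regular over $\kappa(\p)$, so the displayed ring is regular; as $L$ was arbitrary, the fibre over $\bar{\p}$ is geometrically regular over $\kappa(\bar{\p})$.

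The main obstacle I anticipate is bookkeeping with the definition of geometric regularity rather than any deep difficulty: one must use the formulation of geometric regularity that tests against \emph{all} field extensions (equivalently, reduce to finite, and if necessary purely inseparable, extensions via the standard criterion), so that the possibly infinite extension $\kappa(\bar{\p})/\kappa(\p)$ is handled uniformly. Once the identification of the fibres as iterated field base changes is made carefully, geometric regularity propagates immediately from the transitivity of the tower $\kappa(\p)\subset\kappa(\bar{\p})\subset L$, completing the verification.
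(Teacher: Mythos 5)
Your proof is correct, but it is considerably more self-contained than the paper's: the paper disposes of finite presentation in one line and then delegates the entire flatness-plus-fibres verification to Lemma 4 on page 253 of Matsumura's \emph{Commutative Algebra} (stability of regular, i.e.\ flat with geometrically regular fibres, homomorphisms under base change), whereas you unwind that citation and prove it directly. Your identifications are all sound: $(R\otimes_A\bar{A})\otimes_{\bar{A}}N\cong R\otimes_A N$ gives flatness, and the chain $(R\otimes_A\bar{A})\otimes_{\bar{A}}\kappa(\bar{\p})\cong(R\otimes_A\kappa(\p))\otimes_{\kappa(\p)}\kappa(\bar{\p})$ correctly reduces the fibre of the base change to a field base change of a fibre of $R$. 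The one point where your argument still silently carries the weight of the reference is the step ``geometric regularity of a fibre propagates along the tower $\kappa(\p)\subset\kappa(\bar{\p})\subset L$'': this needs the equivalence, for a \emph{finitely generated} algebra over a field, between geometric regularity (regularity after all finite, or finite purely inseparable, extensions) and regularity after \emph{arbitrary} field extensions --- equivalently, that geometric regularity of finite type algebras is itself stable under field base change. You flag this explicitly, and it does hold here because the fibres are finitely generated over their residue fields (this is where finite presentation of $R$ enters, and finite type of $A\to\bar{A}$ keeps everything Noetherian), but it is a genuine theorem rather than bookkeeping, and it is essentially the content of the Matsumura lemma the paper cites. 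So the two routes buy different things: the paper's is shorter and leans on a standard reference; yours makes visible exactly which stability properties of the three defining conditions are being used, at the cost of having to invoke (or prove) the field-extension criterion for geometric regularity.
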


\begin{proof}
If $R$ be a finitely presented $A$-algebra, then under the base change $A\rightarrow \bar{A}$, $R\otimes_A \bar{A}$ is also a finitely presented algebra over $\bar{A}$. Now for the rest of the proof, see Lemma 4 in page 253 of \cite{CAofM}.
\end{proof}

\subsection{Direct Limit of Regular Rings}

Here we state a known result regarding direct limit of regular rings. We also present its proof for completeness.

\begin{lemma}
Let $(B_i,f_{ij})$ be a directed system of local rings whose transition maps are local ring maps. If each $B_i$ is a regular local ring and $B_{\infty}= \lim\limits_{\to i} B_i$ is Noetherian, then $B_{\infty}$ is a regular local ring.
\end{lemma}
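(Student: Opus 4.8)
The plan is to show that the maximal ideal of $B_\infty$ is generated by a regular sequence, obtained by lifting a minimal generating set back to a finite stage of the directed system, where regularity of the $B_i$ is available. Write $\m_i$ for the maximal ideal of $B_i$ and $k_i = B_i/\m_i$. First I would check that $B_\infty$ is local with maximal ideal $\m_\infty = \bigcup_i \im(\m_i \to B_\infty)$. Since the transition maps are local, the images of the $\m_i$ form a directed family of ideals whose union $\m_\infty$ is an ideal; the quotient $B_\infty/\m_\infty = \varinjlim_i k_i$ is a filtered colimit of fields along field extensions and hence a field; and any $x \in B_\infty \setminus \m_\infty$ comes from a unit of some $B_j$ and is therefore a unit in $B_\infty$. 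Thus $\m_\infty$ is the unique maximal ideal, and $k := B_\infty/\m_\infty = \varinjlim_i k_i$.

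Using that $B_\infty$ is Noetherian, let $n = \dim_k \m_\infty/\m_\infty^2$ be its (finite) embedding dimension, and choose a minimal generating set $y_1,\dots,y_n$ of $\m_\infty$. Because $\m_\infty = \varinjlim_i \m_i$ and the index set is filtered, there exist an index $i_0$ and elements $x_1,\dots,x_n \in \m_{i_0}$ whose images in $B_\infty$ are $y_1,\dots,y_n$; for $j \ge i_0$ let $x_\ell^{(j)}$ denote the image of $x_\ell$ in $B_j$. The crucial point is that for each $j \ge i_0$ the elements $x_1^{(j)},\dots,x_n^{(j)}$ are linearly independent in $\m_j/\m_j^2$ over $k_j$: a nontrivial dependence $\sum_\ell c_\ell x_\ell^{(j)} \in \m_j^2$ (with some $c_\ell$ a unit) would map, under the local homomorphism $B_j \to B_\infty$, to a relation $\sum_\ell c_\ell' y_\ell \in \m_\infty^2$ with some $c_\ell'$ a unit, contradicting the minimality of $y_1,\dots,y_n$. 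Since $B_j$ is a regular local ring, any elements of $\m_j$ that are linearly independent modulo $\m_j^2$ extend to a regular system of parameters and hence form a regular sequence, so the Koszul complex $K_\bullet(x_1^{(j)},\dots,x_n^{(j)}; B_j)$ is acyclic in positive degrees.

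Finally I would pass to the colimit. The Koszul complex $K_\bullet(y_1,\dots,y_n; B_\infty)$ is the filtered colimit of the complexes $K_\bullet(x_1^{(j)},\dots,x_n^{(j)}; B_j)$ over $j \ge i_0$, since exterior powers of free modules and the differentials are all compatible with the transition maps. As homology commutes with filtered colimits, $H_p\bigl(K_\bullet(y_1,\dots,y_n; B_\infty)\bigr) = \varinjlim_{j} H_p\bigl(K_\bullet(x_1^{(j)},\dots,x_n^{(j)}; B_j)\bigr) = 0$ for every $p \ge 1$. Hence $y_1,\dots,y_n$ is a regular sequence generating $\m_\infty$, so $\depth B_\infty = n = \dim_k \m_\infty/\m_\infty^2$, and therefore $B_\infty$ is regular. (Equivalently, one may phrase the argument homologically: $\Tor$ commutes with filtered colimits of rings and modules, so $\Tor^{B_\infty}_i(k,k) = \varinjlim_j \Tor^{B_j}_i(k_j,k_j) = \varinjlim_j \Lambda^i_{k_j}(\m_j/\m_j^2) = \Lambda^i_k(\m_\infty/\m_\infty^2)$, which vanishes for $i > n$, giving $\pdp{B_\infty}\, k < \infty$.)

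I expect the main obstacle to be the two structural facts underpinning the lift: that $B_\infty$ is local with maximal ideal $\varinjlim_i \m_i$, which is exactly where the hypothesis that the transition maps are local gets used, and that the lifted generators remain linearly independent modulo $\m_j^2$ at a finite stage. The Noetherian hypothesis on $B_\infty$ is precisely what makes $\m_\infty$ finitely generated and $n$ finite, so that only finitely many generators need to be lifted into a single $B_{i_0}$; without it the embedding dimensions $\dim B_i$ may grow without bound and no finite stage controls $\m_\infty$, which is consistent with the colimit then failing to be Noetherian at all.
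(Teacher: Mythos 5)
Your proof is correct, but it takes a genuinely different route from the paper's. The paper argues by induction on the embedding dimension $d=\dim\m/\m^2$ of $B_\infty$: it lifts a single element $x\in\m\setminus\m^2$ to some $x_i\in\m_i$, observes that $B_\infty/xB_\infty=\varinjlim_{j\ge i}B_j/x_iB_j$ is again a Noetherian local colimit of regular local rings of smaller embedding dimension, concludes by induction that $B_\infty/xB_\infty$ is regular, and then recovers regularity of $B_\infty$ from the fact that $x$ is a nonzerodivisor --- which requires the extra observation that a filtered colimit of domains is a domain. You instead lift the entire minimal generating set $y_1,\dots,y_n$ of $\m_\infty$ to a single finite stage, check that the lifts stay linearly independent modulo $\m_j^2$ (the same ``no new relations modulo the square of the maximal ideal'' point the paper needs for its single element $x$), and deduce that $y_1,\dots,y_n$ is a regular sequence by passing Koszul acyclicity through the filtered colimit. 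Your version avoids both the induction and the domain argument, at the cost of invoking the depth-sensitivity of the Koszul complex (or, in your parenthetical variant, Serre's homological criterion via $\Tor$); it also yields the slightly stronger conclusion that the lifted elements form a regular system of parameters at every finite stage $j\ge i_0$. Both arguments ultimately rest on the same two structural facts you isolate at the end: that $\m_\infty=\varinjlim_i\m_i$ because the transition maps are local, and that the Noetherian hypothesis makes the generating set finite so that it can be captured at one finite stage.
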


\begin{proof}
Let $\m\subset B_{\infty}$ be the maximal ideal; it is the direct limit of the maximal ideal $\m_i\subset B_i$. We proceed by induction on $d=\dim \m/\m^2$. If $d= 0$, then $B_{\infty}= B_{\infty}/\m$ is a field and $B_{\infty}$ is a regular local ring. If $d> 0$ pick an $x\in \m- \m^2$. For some $i$ we can find an $x_i\in \m_i$ mapping to $x$. Note that $B_{\infty}/xB_{\infty}=\lim\limits_{\to j\geq i}B_j/x_iB_j$ is a Noetherian local ring. Since $x_i$ is in the set of minimal generator of $\m_j$ we find that $B_j/x_iB_j$ is a regular local ring. Hence by induction we see that $B_{\infty}/xB_{\infty}$ is a regular local ring. Since each $B_i$ is a domain, $B_{\infty}$ is also a domain. Hence $x$ is a nonzero divisor and we conclude that $B_{\infty}$ is a regular local ring.
\end{proof}


\section{smooth algebra, results of \cite{Nu2} and lyubeznik functor}

In this section, at first in Theorem 3.2, we extend the result of Theorem 2 of \cite{Ly3}, from polynomial or power series algebra over a field to smooth algebra over zero dimensional ring. Later in Theorem 3.5, we use this fact to generalize the result of Theorem 4.1 of \cite{BBLSZ} (see Corollary 1), from the finiteness of the set of associated primes of local cohomology to that of Lyubeznik functor.

To explore our first main result, we need the following lemma. 

\begin{lemma}
Let $k$ be a field of characteristic $p> 0$ and $B$ be a finitely generated $k$-algebra. Set $B_i= B\otimes_k k^{1/p^i}$. If $B_i$ is regular for every $i\geq 0$, then $\lim\limits_{\to i} B_i= B_{\infty}= B\otimes_k k^{1/p^{\infty}}$ is regular.
\end{lemma}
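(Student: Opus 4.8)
The plan is to realize $B_\infty = B \otimes_k k^{1/p^\infty}$ as the direct limit of the regular local rings $B_i = B \otimes_k k^{1/p^i}$ and then invoke the preceding Lemma on direct limits of regular rings. The setup exactly matches that Lemma, except for two technical points I need to address: the Lemma is stated for \emph{local} rings with local transition maps, whereas $B$ and $B_i$ are merely finitely generated $k$-algebras, and the Lemma requires the limit $B_\infty$ to be Noetherian, which I must verify here.

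First I would localize. Pick a prime $\q \subset B_\infty$; I want to show $(B_\infty)_\q$ is a regular local ring, which gives regularity of $B_\infty$ once I know $B_\infty$ is Noetherian. The prime $\q$ contracts to a prime $\q_i \subset B_i$ for each $i$ (via the maps $B_i \to B_\infty$), and these form a compatible system, so I can form the directed system of local rings $\left((B_i)_{\q_i}, f_{ij}\right)$ whose transition maps are local (they send $\q_i$ into $\q_j$ by construction). Since each $B_i$ is regular, each localization $(B_i)_{\q_i}$ is a regular local ring. The limit of this localized system is $(B_\infty)_\q$, because localization commutes with directed colimits. Then the previous Lemma applies to conclude $(B_\infty)_\q$ is regular, \emph{provided} it is Noetherian.

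The main obstacle is therefore the Noetherian hypothesis, which is where the specific structure $B_i = B \otimes_k k^{1/p^i}$ matters rather than an abstract colimit. The key observation is that $k \to k^{1/p^\infty}$ is a purely inseparable (integral, algebraic) field extension, so $B_\infty = B \otimes_k k^{1/p^\infty}$ is integral over the finitely generated, hence Noetherian, $k$-algebra $B$. More usefully, I would argue that $B_\infty$ is itself a finitely generated algebra over the field $k^{1/p^\infty}$: writing $B = k[y_1,\dots,y_m]/J$ for a finitely generated ideal $J$, base change gives $B_\infty = k^{1/p^\infty}[y_1,\dots,y_m]/J\,k^{1/p^\infty}[y_1,\dots,y_m]$, which is a finitely generated algebra over the field $k^{1/p^\infty}$ and hence Noetherian by the Hilbert basis theorem. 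This finite generation over the base field is the crucial point that lets me discharge the Noetherian hypothesis of the direct-limit Lemma.

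With Noetherianity in hand, the localized direct-limit Lemma shows $(B_\infty)_\q$ is regular for every prime $\q$, so $B_\infty$ is a regular ring, completing the argument. The only subtlety I would double-check is that the hypothesis ``$B_i$ is regular for every $i$'' is genuinely used: it guarantees each $(B_i)_{\q_i}$ is regular local, which is exactly the input the direct-limit Lemma needs. I would also remark that the field maps $k^{1/p^i} \hookrightarrow k^{1/p^{i+1}}$ induce flat (indeed faithfully flat) transition maps $B_i \to B_{i+1}$, so the system is well-behaved and the contracted primes $\q_i$ are compatible, but the regularity conclusion ultimately rests on the two ingredients above: compatibility of localization with the colimit, and finite generation of $B_\infty$ over $k^{1/p^\infty}$.
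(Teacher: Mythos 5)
Your proposal is correct and follows essentially the same route as the paper's proof: both establish that $B_\infty$ is a finitely generated $k^{1/p^\infty}$-algebra (hence Noetherian), localize at a prime (the paper uses maximal ideals, which suffices), identify $(B_\infty)_\q$ with the direct limit of the regular local rings $(B_i)_{\q_i}$, and apply the preceding lemma on direct limits of regular local rings. The only cosmetic difference is that the paper verifies the identification $(B_\infty)_\m=\lim\limits_{\to i}(B_i)_{\m_i}$ by an explicit element chase rather than citing compatibility of localization with colimits.
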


\begin{proof}
Since $B$ is a finitely generated $k$-algebra, $B_i$ is also a finitely generated $k^{1/p^i}$-algebra due to base change. From flatness of $k^{1/p^i}\rightarrow k^{1/p^{i+1}}$, we find that $B_{i+1}$ is also a flat $B_i$-algebra. $B_{\infty}$ is also a finitely generated $k^{1/p^{\infty}}$ and so it is Noetherian and it turns out that $B_{\infty}= \lim_{\rightarrow i} B_i$. For a maximal ideal $\m\subset B_{\infty}$, let $\m_i= \m\cap B_i$ be a prime ideal of $B_i$. Consider the local rings $(B_i)_{\m_i}$. Observe the following commutative diagram 
\[
\CD
\ldots @>>>(B_{i})_{\m_{i}} @>>>(B_{i+1})_{\m_{i+1}} @>>>\ldots @>>>(B_{\infty})_{\m}\\
@. @AAA @AAA @. @AAA @. \\
\ldots @>>>B_{i} @>>>B_{i+1} @>>>\ldots @>>>B_{\infty}
\endCD
\]

where each map is a localization map and each of the horizontal map of local rings is injective, since each of the local ring is faithfully flat extension of the previous. For any $b/s\in (B_{\infty})_{\m}$, take $b,s \in B_{\infty}$, such that $s\notin \m$. Clearly, there exists, some $B_i$, where $b,s\in B_i$ and $s\notin \m_i$. Thus $b/s\in (B_i)_{\m_i}$. This gives that $(B_{\infty})_{\m}= \lim\limits_{\to i}(B_i)_{\m_i}$ and it is Noetherian. So from Lemma 2.3 we find that $(B_{\infty})_{\m}$ is a regular local ring. Thus $B_{\infty}$ is a regular ring.
\end{proof}

Now we state our first main result of this section.

\begin{thm}
Let $A$ be a zero-dimensional Noetherian ring. Let $R$ be a smooth $A$-algebra containing $A$. Then, $R_f$ has finite length as a $D(R,A)$-module for every $f\in R$.
\end{thm}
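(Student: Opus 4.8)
The plan is to reduce to the residue field of $A$ by a nilpotent filtration and then invoke the field case, Theorem 2 of \cite{Ly3}, with the positive-characteristic case handled by passing to the perfect closure. Since $A$ is a zero-dimensional Noetherian ring it is Artinian, hence a finite product of Artinian local rings, and both $R_f$ and $D(R,A)$ split as finite products along this decomposition: the latter by the base-change isomorphism $D(R,A)\otimes_A A'\cong D(R\otimes_A A',A')$ for smooth $R$ (Lemma 2.1 of \cite{BBLSZ}) together with Lemma 2.2, so that finite length over $D(R,A)$ follows from finite length over each factor. I would therefore assume from the start that $A$ is Artinian local with maximal ideal $\m$ and residue field $k=A/\m$; as $A$ is Artinian, $\m^{N}=0$ for some $N$, and I would study $R_f$ through the finite filtration
\[
R_f\supset \m R_f\supset \m^{2}R_f\supset\cdots\supset \m^{N}R_f=0,
\]
whose successive quotients are $D(R,A)$-modules annihilated by $\m$.

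Because $R$ is flat over $A$ (smoothness) and localization preserves flatness, $R_f$ is $A$-flat, so tensoring $0\to\m^{j+1}\to\m^{j}\to\m^{j}/\m^{j+1}\to 0$ with $R_f$ gives $\m^{j}R_f/\m^{j+1}R_f\cong(\m^{j}/\m^{j+1})\otimes_A R_f\cong(\m^{j}/\m^{j+1})\otimes_k(R_f/\m R_f)$. Each $\m^{j}/\m^{j+1}$ is a finite-dimensional $k$-vector space, so this quotient is a finite direct sum of copies of $R_f/\m R_f=(R/\m R)_{\bar f}$, where $\bar f$ is the image of $f$. Via the base-change isomorphism $D(R,A)/\m D(R,A)\cong D(R/\m R,k)$, each graded piece is then a finite direct sum of copies of the $D(R/\m R,k)$-module $(R/\m R)_{\bar f}$. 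Hence it suffices to prove that $(R/\m R)_{\bar f}$ has finite length as a $D(R/\m R,k)$-module, for then $R_f$ has finite length as a $D(R,A)$-module, a module with a finite filtration whose quotients all have finite length (Remark~\ref{RemFinLen}(b)).

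Now $R/\m R=R\otimes_A k$ is smooth, hence regular and finitely generated, over the field $k$. If $\operatorname{char}k=0$, then $(R/\m R)_{\bar f}$ has finite length as a $D(R/\m R,k)$-module directly by Theorem 2 of \cite{Ly3}. If $\operatorname{char}k=p>0$, the field $k$ may fail to be $F$-finite, so Theorem 2 of \cite{Ly3} does not apply as stated, and I would base change to the perfect closure $k^{1/p^\infty}$. Writing $\bar R=R/\m R$ and $\bar R_\infty=\bar R\otimes_k k^{1/p^\infty}$, geometric regularity of the fibre makes each $\bar R\otimes_k k^{1/p^i}$ regular, so Lemma 3.1 shows $\bar R_\infty$ is regular; being finitely generated over the perfect, hence $F$-finite, field $k^{1/p^\infty}$, it satisfies the hypotheses of Theorem 2 of \cite{Ly3}, whence $(\bar R_\infty)_{\bar f}$ has finite length as a $D(\bar R_\infty,k^{1/p^\infty})$-module.

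It then remains to descend finite length along the faithfully flat extension $k\to k^{1/p^\infty}$. By Lemma 2.1 of \cite{BBLSZ} one has $D(\bar R_\infty,k^{1/p^\infty})\cong D(\bar R,k)\otimes_k k^{1/p^\infty}$ and $(\bar R_\infty)_{\bar f}\cong(\bar R)_{\bar f}\otimes_k k^{1/p^\infty}$, and tensoring a strictly ascending chain of $D(\bar R,k)$-submodules of $(\bar R)_{\bar f}$ with $k^{1/p^\infty}$ yields, by faithful flatness, a strictly ascending chain of $D(\bar R_\infty,k^{1/p^\infty})$-submodules of $(\bar R_\infty)_{\bar f}$; finite length of the latter bounds the length of the former, giving finite length of $(\bar R)_{\bar f}$. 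I expect the main obstacle to be precisely this positive-characteristic step, namely checking that the perfect-closure base change simultaneously preserves regularity (through Lemma 3.1 and geometric regularity), the differential-operator structure (through the base-change isomorphism), and strictness of submodule chains (through faithful flatness), so that Theorem 2 of \cite{Ly3} can be applied upstairs and its conclusion descended.
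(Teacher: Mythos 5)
Your proposal is correct and follows essentially the same route as the paper: filter $R_f$ by a finite filtration coming from the Artinian ring $A$ so that the graded pieces are copies of $(R/\m R)_{\bar f}$, transfer the $D$-module structure via the base-change isomorphism of Lemma 2.1 of \cite{BBLSZ}, and apply Theorem 2 of \cite{Ly3} to the regular closed fibre, using Lemma 3.1 to get regularity of $R/\m R\otimes_k k^{1/p^\infty}$ in characteristic $p$. The only cosmetic differences are that the paper uses a composition series of $A$ rather than the product-of-local-rings plus $\m$-adic filtration, and it cites part (c) of Theorem 2 of \cite{Ly3} directly where you re-derive that case by faithfully flat descent from the $F$-finite case.
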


\begin{proof}
Here, $A$ has finite length as an $A$-module, and let $0=N_0\subset N_1\subset \ldots \subset N_\ell=A$ be a finite filtration of ideals such that $N_{j+1}/N_j$ is isomorphic to a field. Since $A$ sits inside $R$, we have an induced filtration of $D(R,A)$-modules, $0=N_0 R_f\subset N_1 R_f \subset \ldots \subset N_\ell R_f=R_f$. It suffices to prove that $N_{j+1} R_f/N_j R_f$ is of finite length for $j=1,\ldots, \ell$. We note that $N_{j+1} R_f/N_j R_f$ is zero or isomorphic to $ (R/\m R)_f$ for some maximal ideal $\m\subset A$. 

Set $k= A/\m$ and $B= R/\m R$ which is a finitely generated $k$-algebra. Since $R$ is smooth over $A$, due to base change, $R/\m R$ is also smooth over $k$ and hence it is regular. If $\Char{k}= 0$, then using (a) of Theorem 2 of \cite{Ly3} we can say that $(R/\m R)_f$ is of finite length as $D(R/\m R,A/\m)$ module. If $\Char{k}= p> 0$, set $B_i= R/\m R\otimes_k k^{1/p^i}$ and due to smoothness $B_i$ is regular for each $i$. Thus from Lemma 3.1, we get $B_{\infty}= B\otimes_k k^{1/p^{\infty}}= R/\m R\otimes_k k^{1/p^{\infty}}$ is regular. Thus using (c) of Theorem 2 of \cite{Ly3}, again we find that $(R/\m R)_f$ is of finite length as $D(R/\m R,A/\m)$ module. Thus, irrespective of the characteristics of $k= A/\m$, $(R/\m R)_f$ is of finite length as $D(R/\m R,A/\m)$ module.

So, $N_{j+1} R_f/N_j R_f$ has finite length as a $D(R/m R,A/m A)$-module and from Lemma 2.1 of \cite{BBLSZ}, we get $D(R/\m R,A/\m A)= D(R,A)/\m D(R,A)$. Thus, it has finite length as a $D(R,A)$-module, which concludes the proof. 
\end{proof}

The following results generalizes that of Proposition 3.3 of \cite{Nu2} from polynomial or power series ring to smooth algebra over zero dimensional ring.
 
\begin{prop}
Let $A$ be a zero-dimensional commutative Noetherian ring. Let $R$ be a smooth $A$-algebra such that $A\subset R$.
Then, $\Ass_R M$ is finite for every object in  $M\in C(R,A)$; in particular, this holds for $\T (R)$ for Lyubeznik functor $\T$.  
\end{prop}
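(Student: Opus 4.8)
The plan is to assemble the statement from the finite-length machinery already established, treating it as a formal consequence of Theorem 3.2 together with the length-theoretic observations in Lemma 2.1 and Remark \ref{RemFinLen}. The chain of implications is the following: smoothness over a zero-dimensional base forces every localization $R_f$ to have finite $D(R,A)$-length; this finite-length property propagates to every object of $C(R,A)$; and finite length forces finiteness of the set of associated primes.

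First I would invoke Theorem 3.2. Since $A$ is zero-dimensional Noetherian and $R$ is a smooth $A$-algebra with $A \subset R$, that theorem guarantees that $R_f$ has finite length as a $D(R,A)$-module for every $f \in R$. This is precisely the hypothesis required by part (b) of Lemma 2.1, so applying it yields that every object $M \in C(R,A)$ has finite length as a $D(R,A)$-module; in particular $M$ admits a composition series whose factors are nonzero simple $D(R,A)$-modules, and I write $\Comp{M}$ for the resulting finite set of factors. Next I would pass from finite length to finiteness of associated primes using Remark \ref{RemFinLen}(a): each $N \in \Comp{M}$ is a nonzero simple $D(R,A)$-module and hence has a single associated prime, so $\Ass_R M \subset \bigcup_{N \in \Comp{M}} \Ass_R N$ is a finite union of singletons and is therefore finite. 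For the ``in particular'' clause I would note that $R \in C(R,A)$ by definition and that Lemma 2.1(a) shows $\T(R)$ is again an object of $C(R,A)$, so the preceding argument applies verbatim to $M = \T(R)$.

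The argument is essentially bookkeeping once Theorem 3.2 is available: the only genuine input is the finite-length statement for the localizations $R_f$, whose proof rests on reducing modulo the maximal ideals of $A$, the smoothness (hence regularity) of the closed fibers, and Lyubeznik's Theorem 2 (together with Lemma 3.1 in positive characteristic). Accordingly I do not expect a substantive obstacle in this proposition; the only care needed is in checking that the hypotheses of Lemma 2.1(b) and Remark \ref{RemFinLen} are applied over the correct ring of differential operators, namely that $D(R,A)$ is the ambient ring throughout and that the reduction isomorphism $D(R/\m R, A/\m A) \cong D(R,A)/\m D(R,A)$ from Lemma 2.1 of \cite{BBLSZ} is what permits viewing the fiberwise composition factors as $D(R,A)$-modules.
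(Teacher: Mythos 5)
Your proposal is correct and follows essentially the same route as the paper: Theorem 3.2 gives finite $D(R,A)$-length of each $R_f$, Lemma 2.1(b) propagates this to all objects of $C(R,A)$, and finite length forces $\Ass_R M$ to be finite. The only cosmetic difference is that you justify the last step via Remark 2(a) of the paper while the paper cites Lemma 3.1 of \cite{Nu2}; these are the same fact.
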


\begin{proof}
By Theorem 3.2, $R_f$ has finite length in the category of $D(R,A)$-modules for every $f \in R$. If $M$ is an object of $C(R,A)$, then by Lemma 2.1, $M$ has finite length as a $D(R,A)$-module. Now, using Lemma 3.1 of \cite{Nu2}, we conclude.
\end{proof}



We need the following important result.

\begin{lemma}\label{LemmaAQFL}
Let $A$ be a Noetherian ring and $R$ be a smooth $A$-algebra. Consider an element $\pi\in A$ and set $\bar{A}$ and $\bar{R}$ denote $A/\pi A$ and $R/\pi R$ respectively. Let $M$ be a $D(R,A)$-module, such that $\Ann_{M} (\pi)$  and $M \otimes_R \bar{R}$ are objects in $C(\bar{R},\bar{A})$. Then, $\Ann_{\T(M)} (\pi)$  and $\T(M) \otimes_R \bar{R}$ are objects in $C(\bar{R},\bar{A})$ for every functor $\T$.
\end{lemma}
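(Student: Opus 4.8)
The plan is to prove the equivalent statement that the property $\mathcal{P}(N)$, meaning \emph{both $\Ann_{N}(\pi)$ and $N/\pi N$ are objects of $C(\bar{R},\bar{A})$}, passes from a $D(R,A)$-module $N$ to $\T(N)$ for every Lyubeznik functor $\T$: the hypothesis is exactly $\mathcal{P}(M)$ (note $M\otimes_R\bar{R}=M/\pi M$) and the conclusion is $\mathcal{P}(\T(M))$. Two structural facts are needed at the outset. First, since $\pi\in A$ and every element of $D(R,A)$ is $A$-linear, $\pi$ is central in $D(R,A)$, so multiplication by $\pi$ on any $D(R,A)$-module is $D(R,A)$-linear; hence for any $N$ the modules $\Ann_{N}(\pi)$ and $N/\pi N$ are $D(R,A)$-modules annihilated by $\pi$, and since $\bar{R}$ is smooth over $\bar{A}$ by Lemma 2.2 (the base change $A\to\bar{A}$ being of finite type), the isomorphism $D(\bar{R},\bar{A})\cong D(R,A)/\pi D(R,A)$ of Lemma 2.1 of \cite{BBLSZ} makes them $D(\bar{R},\bar{A})$-modules, so it is meaningful to ask whether they lie in $C(\bar{R},\bar{A})$. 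Second, because $\T=\T_1\circ\cdots\circ\T_t$ is a composite of the building-block functors of its definition, it suffices by induction on $t$ to show that each building-block functor sends a $\mathcal{P}$-module to a $\mathcal{P}$-module.

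Before treating the two kinds of building blocks I would record the reduction used throughout. If $P$ is a $\pi$-torsion $D(R,A)$-module lying in $C(\bar{R},\bar{A})$ and $Z=\V(I)$, then $H^i_Z(P)$, computed over $R$, is again $\pi$-torsion and agrees with the local cohomology $H^i_{\bar{Z}}(P)$ computed over $\bar{R}$, where $\bar{Z}=\V(I\bar{R})$; indeed $H^i_Z(P)$ is the cohomology of the \v{C}ech complex on a generating set of $I$, whose terms are localizations of $P$ and are therefore unchanged on passing to $\bar{R}$, compatibly with the $D$-structures via $D(\bar{R},\bar{A})\cong D(R,A)/\pi D(R,A)$. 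Since $H^i_{\bar{Z}}$ is a Lyubeznik functor over $\bar{R}$, part (a) of Lemma 2.1 applied over $\bar{R}$ gives $H^i_Z(P)=H^i_{\bar{Z}}(P)\in C(\bar{R},\bar{A})$. The same remark applies verbatim to the relative functors $H^i_{Z_1/Z_2}$, which are likewise $\delta$-functors for which the \v{C}ech-type reduction holds.

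For the first building-block case, let $N$ satisfy $\mathcal{P}(N)$ and take $\T_1=H^i_Z$ (the argument for $H^i_{Z_1/Z_2}$ is identical). I would factor multiplication by $\pi$ on $N$ as $N\twoheadrightarrow\pi N\hookrightarrow N$ and feed the short exact sequences $0\to\Ann_{N}(\pi)\to N\to\pi N\to 0$ and $0\to\pi N\to N\to N/\pi N\to 0$ into the long exact cohomology sequences of $H^\bullet_Z$. As multiplication by $\pi$ is functorial, the induced endomorphism of $H^i_Z(N)$ is the corresponding composite, so $\Ann_{H^i_Z(N)}(\pi)$ and $H^i_Z(N)/\pi H^i_Z(N)$ are assembled, as kernels, images, cokernels and extensions, out of the modules $H^j_Z(\Ann_{N}(\pi))$ and $H^j_Z(N/\pi N)$. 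By the reduction these all lie in $C(\bar{R},\bar{A})$, and closure of that subcategory under subobjects, quotients and extensions yields $\mathcal{P}(H^i_Z(N))$.

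For the second case, let $g\colon U\to V$ be a map occurring in the long exact sequence, where $U$ and $V$ are (relative) local cohomology modules of $N$ and hence already satisfy $\mathcal{P}$ by the previous paragraph; I must treat $\Ker g$, $\im g$ and $\Coker g$. The annihilator halves for $\Ker g$ and $\im g$ are immediate from $\Ann_{\Ker g}(\pi)\subset\Ann_{U}(\pi)$ and $\Ann_{\im g}(\pi)\subset\Ann_{V}(\pi)$, while the mod-$\pi$ halves for $\im g$ and $\Coker g$ are immediate, being quotients of $U/\pi U$ and $V/\pi V$. The two remaining modules, $(\Ker g)/\pi(\Ker g)$ and $\Ann_{\Coker g}(\pi)$, I would extract from the six-term snake sequences of $0\to\Ker g\to U\to\im g\to 0$ and $0\to\im g\to V\to\Coker g\to 0$, in which each appears as an extension of a subobject and a quotient of modules already known to lie in $C(\bar{R},\bar{A})$; closure of the subcategory finishes the case, and assembling both cases through the induction on $t$ completes the proof. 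I expect the main obstacle to be the reduction of the second paragraph, namely the $D$-module-compatible identification of $H^i_Z(P)$ with local cohomology over $\bar{R}$ for $\pi$-torsion $P$: this is exactly where smoothness enters, through Lemma 2.2 and $D(\bar{R},\bar{A})\cong D(R,A)/\pi D(R,A)$, and it is what licenses the use of part (a) of Lemma 2.1 over $\bar{R}$; once it and the centrality of $\pi$ in $D(R,A)$ are in place, the remainder is diagram chasing with the two torsion sequences.
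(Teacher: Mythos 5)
Your proposal is correct and follows essentially the same route as the paper: the paper's proof consists of exactly your first observation (that $\Ann_M(\pi)$ and $M\otimes_R\bar R$ acquire $D(\bar R,\bar A)$-structures via smoothness and Lemma 2.1 of \cite{BBLSZ}) and then defers the remaining diagram-chasing to Lemma 3.5 of \cite{Nu2}, which is precisely the induction on the factors of $\T$, the two $\pi$-torsion exact sequences, and the base-independence of local cohomology for $\pi$-torsion modules that you write out in full.
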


\begin{proof}
We first note the following: Since $M$ is $D(R,A)$-module, both $\Ann_{M} (\pi)$ and $M \otimes_R \bar{R}$ are $D(R,A)/\pi D(R,A)$-module. Since $R$ is smooth over$A$, from Lemma 2.1 of \cite{BBLSZ} we find that both $\Ann_{M} (\pi)$ and $M \otimes_R \bar{R}$ are $D(\bar{R},\bar{A})$-module. 

Now, rest of the proof is similar to that of Lemma 3.5 of \cite{Nu2}.

\end{proof}

Now, we state the next main results of this section in the following theorem and in the corollaries.

\begin{thm} \label{PropAssDO}
Let $A$ be a one-dimensional domain,
and $\pi\in A$ be an element. Let $R$ be a faithfully flat smooth $A$-algebra. Then, the set of associated primes over $R$ of $\T (R)$ that contain $\pi$ is finite, for Lyubeznik functor $\T$.
\end{thm}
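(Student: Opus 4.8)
The plan is to reduce the one-dimensional base $A$ to a zero-dimensional one by passing modulo $\pi$, so that Proposition 3.3 applies. The first observation is that the associated primes of $\T(R)$ that contain $\pi$ are exactly the associated primes of the $\pi$-torsion submodule $\Ann_{\T(R)}(\pi)$. Indeed, if $P\in\Ass_R \T(R)$ with $\pi\in P$, write $P=\Ann_R(x)$ for some $x\in\T(R)$; then $\pi x=0$, so $x\in\Ann_{\T(R)}(\pi)$ and $P\in\Ass_R(\Ann_{\T(R)}(\pi))$. Conversely, since $\Ann_{\T(R)}(\pi)$ is a submodule of $\T(R)$, every $P\in\Ass_R(\Ann_{\T(R)}(\pi))$ lies in $\Ass_R\T(R)$, and it contains $\pi$ because $\pi$ annihilates this submodule. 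Hence it suffices to prove that $\Ass_R(\Ann_{\T(R)}(\pi))$ is finite. We may assume $\pi\neq 0$: if $\pi$ is a unit the set of primes containing it is empty, so there is nothing to prove.

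Next I would set $\bar{A}=A/\pi A$ and $\bar{R}=R/\pi R$ and record the structure of this quotient. Since $A$ is a one-dimensional domain and $\pi\neq 0$, the ring $\bar{A}$ is zero-dimensional and Noetherian. By Lemma 2.2 the base change $A\to\bar{A}$ (which is of finite type, indeed finite) makes $\bar{R}=R\otimes_A\bar{A}$ smooth over $\bar{A}$, and faithful flatness of $R$ over $A$ yields $\pi R\cap A=\pi A$, so that $\bar{A}$ embeds in $\bar{R}$. Thus the pair $(\bar{A},\bar{R})$ satisfies the hypotheses of Proposition 3.3, and consequently $\Ass_{\bar{R}} N$ is finite for every object $N$ of $C(\bar{R},\bar{A})$.

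The key step is to show that $\Ann_{\T(R)}(\pi)$ is an object of $C(\bar{R},\bar{A})$, which is precisely the content of Lemma \ref{LemmaAQFL} once its hypotheses are checked for $M=R$. Because $A$ is a domain and $\pi\neq 0$, flatness of $R$ over $A$ makes $\pi$ a nonzerodivisor on $R$, so $\Ann_R(\pi)=0$, which is trivially an object of $C(\bar{R},\bar{A})$; and $R\otimes_R\bar{R}=\bar{R}=\bar{R}_1$ is an object of $C(\bar{R},\bar{A})$ directly from the definition of that subcategory. Therefore Lemma \ref{LemmaAQFL} applies to $M=R$ and shows that $\Ann_{\T(R)}(\pi)$ lies in $C(\bar{R},\bar{A})$.

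Finally I would combine the pieces. By the previous paragraph and Proposition 3.3, the set $\Ass_{\bar{R}}(\Ann_{\T(R)}(\pi))$ is finite. Since $\pi$ annihilates $\Ann_{\T(R)}(\pi)$, every associated prime over $R$ of this module contains $\pi$, and the usual bijection between primes of $\bar{R}$ and primes of $R$ containing $\pi$ identifies $\Ass_{\bar{R}}(\Ann_{\T(R)}(\pi))$ with $\Ass_R(\Ann_{\T(R)}(\pi))$; hence the latter is finite, which by the first paragraph is exactly the set of associated primes of $\T(R)$ containing $\pi$. The main obstacle is the passage through Lemma \ref{LemmaAQFL}: everything hinges on the fact that forming the $\pi$-torsion and the reduction modulo $\pi$ keep us inside $C(\bar{R},\bar{A})$, which itself rests on the base-change identity $D(\bar{R},\bar{A})=D(R,A)/\pi D(R,A)$ for smooth algebras.
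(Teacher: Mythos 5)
Your proposal is correct and follows essentially the same route as the paper: reduce modulo $\pi$, use Lemma \ref{LemmaAQFL} with $M=R$ to place $\Ann_{\T(R)}(\pi)$ in $C(\bar{R},\bar{A})$, and conclude finiteness over the zero-dimensional base (you invoke Proposition 3.3 as a package where the paper re-derives it from Theorem 3.2 and Lemma 2.1, and you argue the $\Ass_{\bar R}$ versus $\Ass_R$ identification directly where the paper cites Proposition 2.2 of \cite{Pu1}). Your explicit check that faithful flatness gives $\bar{A}\subset\bar{R}$ is a welcome detail the paper leaves implicit; just note that the degenerate case to exclude at the outset is $\pi=0$ (where $\bar A$ is not zero-dimensional), not $\pi$ a unit.
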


\begin{proof}
Set $\bar{A}= A/\pi A$ and $\bar{R}= R/\pi R$. The set of associated primes of $\T (R)$ that contain $\pi$ is $\Ass_R \Ann_{\T(R)}  (\pi)$. Here $\pi\in A$ is a nonzero divisor of $A$. Since $R$ is flat over $A$, $\pi\in A$ is also a nonzero divisor of $R$. Thus $\Ann_{R}(\pi)= 0$. Moreover, $\bar{R}\in C(\bar{R},\bar{A})$. So, we can apply Lemma 3.4, to see that $\Ann_{\T(R)} (\pi)$ is in $C(\bar{R},\bar{A})$. 

Since $R$ is smooth over $A$, due to finite base change $A\rightarrow \bar{A}$, $\bar{R}$ is also smooth over $\bar{A}$. Here $\dim{\bar{A}}= 0$. Thus from Theorem 3.2 we get that $\bar{R}_{\bar{f}}$ is of finite length for every $\bar{f}\in \bar{R}$. From first paragraph of the proof, we already know that $\Ann_{\T(R)} (\pi)$ is in $C(\bar{R},\bar{A})$. Thus from Lemma 2.1, we conclude that $\Ann_{\T(R)} (\pi)$ is also a $D(\bar{R},\bar{A})$-module of finite length as an $\bar{R}$-module. So, we have that $\Ass_{\bar{R}} \Ann_{\T(R)} (\pi)$ is finite by Lemma 3.1 \cite{Nu2}. Thus $\Ass_{{R}}\Ann_{\T(R)} (\pi)$ is finite by Proposition 2.2 of \cite{Pu1}.
\end{proof}

Recall the definition of $p$-ring from \cite{CRTofM}, which is also referred as DVR of mixed characteristic (see, statement of Theorem 4.1 of \cite{BBLSZ}). Now, we have the result which generalizes that of Theorem 4.1 of \cite{BBLSZ}.

\begin{cor}
Let $A$ be a one dimensional regular domain containing $\ZZ$ and $R$ be a smooth local $A$-algebra of mixed characteristic $p> 0$, containing $A$. Then for Lyubeznik functor $\T$, $\Ass_R \T(R)$ is a finite set.\newline
\end{cor}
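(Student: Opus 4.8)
The plan is to split $\Ass_R\T(R)$ according to whether an associated prime contains $p$, dispatch each half by one of the two finiteness results already established, and take the union; the one genuine idea is a preliminary reduction of the base ring. First I would exploit that $R$ is local of mixed characteristic $p$. Let $\m$ be the maximal ideal of $R$ and set $\m_A=\m\cap A$. Then $p\in\m$, so $p\in\m_A$ and $\m_A$ is a nonzero prime of the one-dimensional regular domain $A$; hence it is a height-one maximal ideal and $A_{\m_A}$ is a discrete valuation ring of mixed characteristic $p$. Every $s\in A\smallsetminus\m_A$ lies outside $\m$ and is therefore a unit in the local ring $R$, so the structure map $A\to R$ factors through $A_{\m_A}$ and $R\otimes_A A_{\m_A}=R$. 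Thus $R$ is a smooth $A_{\m_A}$-algebra, and since the base is now local it is moreover faithfully flat over $A_{\m_A}$. As the inverted elements act invertibly on $R$ one has $D(R,A_{\m_A})=D(R,A)$, so neither $\T(R)$ nor $\Ass_R\T(R)$ is affected; I may therefore assume from now on that $A$ itself is a DVR of mixed characteristic $p$, and note that $p$ is then a nonzerodivisor on $R$ by flatness.

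Next, for the primes that contain $p$: since $A$ is now a one-dimensional domain, $p\in A$, and $R$ is faithfully flat and smooth over $A$, Theorem~\ref{PropAssDO} applies with $\pi=p$ and shows that $\{P\in\Ass_R\T(R):p\in P\}$ is finite.

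For the primes that do not contain $p$, I would invert $p$. Because $A$ is a DVR whose maximal ideal contains $p$, inverting $p$ inverts a uniformizer, so $K:=A[1/p]=\mathrm{Frac}(A)$ is a field of characteristic zero. The map $A\to K$ is of finite type, so Lemma~2.2 gives that $R[1/p]=R\otimes_A K$ is smooth over $K$, with $K\subset R[1/p]$ by faithful flatness. As $K$ is zero-dimensional, Proposition~3.3 yields that $\Ass_{R[1/p]}\T(R[1/p])$ is finite. Since local cohomology, and hence every Lyubeznik functor, commutes with localization, one has $\T(R[1/p])\cong\T(R)[1/p]$, and contraction along $R\to R[1/p]$ identifies $\Ass_{R[1/p]}\T(R)[1/p]$ with $\{P\in\Ass_R\T(R):p\notin P\}$; hence this set is finite as well.

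Taking the union of these two finite sets shows that $\Ass_R\T(R)$ is finite. The crux is the reduction of the first paragraph: locality together with mixed characteristic $p$ is exactly what lets me pass to a DVR base, and this single move simultaneously supplies the faithful flatness required by Theorem~\ref{PropAssDO} and forces the generic fibre $R[1/p]$ to be smooth over the characteristic-zero field $\mathrm{Frac}(A)$, bringing the zero-dimensional-base Proposition~3.3 into play. The remaining ingredients, namely the splitting of $\Ass$ along $\V(p)$ and its complement and the compatibility of $\T$ with localization, are routine.
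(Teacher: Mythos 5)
Your proof is correct and uses the same basic decomposition as the paper --- splitting $\Ass_R \T(R)$ into the primes containing $p$ and those not containing $p$ --- but it diverges in two substantive ways. First, the paper simply invokes Theorem \ref{PropAssDO} for the primes containing $p$, even though that theorem requires $R$ to be \emph{faithfully} flat over $A$, a hypothesis not literally guaranteed by ``smooth local $A$-algebra containing $A$'' when $A$ has maximal ideals outside the image of $\Spec R$; your preliminary localization of $A$ at $\m\cap A$, turning the base into a DVR over which the local flat map is automatically faithfully flat, supplies exactly this missing hypothesis and is a genuine improvement in rigor (the identification $R\otimes_A A_{\m\cap A}=R$ and the preservation of smoothness under this base change are both fine). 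Second, for the primes not containing $p$, the paper cites Corollary 4.5 of \cite{Nu1} as a black box, whereas you invert $p$, observe that $R[1/p]=R\otimes_A \mathrm{Frac}(A)$ is smooth over a characteristic-zero field, and apply the paper's own Proposition 3.3 together with the compatibility of Lyubeznik functors and of $\Ass$ with localization; this makes the argument self-contained within the paper's toolkit at the cost of the (routine, but worth stating) verification that $\T$ commutes with the localization $R\to R[1/p]$. Both routes are valid; yours buys independence from the external reference and patches the faithful-flatness gap, while the paper's is shorter.
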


\begin{proof}
Due to smoothness, $R$ is regular local ring. Consider the prime $p\in \ZZ$. Set $\Ass_R \T(R)=S_1 \cup S_2$, where $\p\in S_1$ if and only if $p\in \p$ and $\p\in S_2$ if and only if $p\notin \p$. From Theorem 3.5 above, $S_1$ is finite, while from Corollary 4.5 of \cite{Nu1}, $S_2$ is finite. Thus $\Ass_R \T(R)$ is a finite set.
\end{proof}

\begin{rem}
The result of above corollary generalizes the result of Theorem 4.1 of \cite{BBLSZ}, from the finiteness of the set of associated primes of local cohomology to that of Lyubeznik functor. 
The technique of proof which is adopted here, is different from that of \cite{BBLSZ}. Moreover, from the technique of proof of Theorem 4.1 of \cite{BBLSZ}, it appears that, it may not be easy to generalize the result of Theorem 4.1 of \cite{BBLSZ} for Lyubeznik functor. 
\end{rem}

\begin{rem}
The result of above corollary is true for local case and till now we donot know whether it can be extended to an arbitrary smooth non local algebra. But recently, in Theorem F of \cite{Nu4}, similar result is obtained for polynomial ring over $\ZZ$.
\end{rem}

From structure of complete regular local ring we know that for every complete ramified regular local ring can be thought as an extension of a complete $p$-ring, see Theorem 29.7 and 29.8 of \cite{CRTofM}. The extension is the composition of a power series extension followed by a module finite `Eisenstein extension'. In the following corollary, we explore a special situation of such extension.

\begin{cor}
For complete regular local ring $R$ of mixed characteristic $p> 0$, if it is smooth over a complete $p$-ring, or more generally, for any regular local ring $R$ of mixed characteristic $p$, if its completion is smooth over a complete $p$-ring then for Lyubeznik functor $\T$, $\Ass_R \T(R)$ is a finite set.\newline
\end{cor}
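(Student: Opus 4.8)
The plan is to reduce both assertions to Corollary 1 together with the faithfully flat descent recorded in Remark 1, using only the structure theory of $p$-rings. First I would recall that a complete $p$-ring $A$ is, by definition, a complete discrete valuation ring of mixed characteristic $(0,p)$ whose maximal ideal is generated by $p$ (Theorem 29.7 of \cite{CRTofM}); in particular $A$ is a one-dimensional regular local domain, and since its characteristic is zero it contains $\ZZ$. Thus a complete $p$-ring is precisely a ring of the type permitted as the base $A$ in Corollary 1, and this observation is what makes that corollary applicable here.

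For the first case, where $R$ is complete regular local of mixed characteristic $p$ and smooth over such an $A$, I would argue directly. Because smoothness entails flatness, the structure map $A\to R$ is a flat local homomorphism of Noetherian local rings, hence faithfully flat and therefore injective, so $A\subset R$. Now $R$ is a smooth local $A$-algebra of mixed characteristic $p$ containing the one-dimensional regular domain $A\supset\ZZ$, so Corollary 1 applies verbatim and yields that $\Ass_R \T(R)$ is finite for every Lyubeznik functor $\T$.

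For the second case I would pass to the completion. If $R$ is a regular local ring of mixed characteristic $p$ whose completion $\hat{R}$ is smooth over a complete $p$-ring, then $\hat{R}$ is itself a complete regular local ring of mixed characteristic $p$ (completion preserves regularity and the residue field, hence the mixed-characteristic type), so $\hat{R}$ falls under the hypotheses of the first case and $\Ass_{\hat{R}} \T(\hat{R})$ is finite. Since $\hat{R}$ is faithfully flat over $R$, Remark 1 transfers this finiteness downward: $\Ass_{\hat{R}} \T(\hat{R})$ is finite if and only if $\Ass_R \T(R)$ is finite. Hence $\Ass_R \T(R)$ is finite, which is the claim.

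The routine verifications (that a $p$-ring is a one-dimensional regular domain containing $\ZZ$, and that $A\hookrightarrow R$) are immediate from standard structure theory, so the only conceptual point requiring care is the legitimacy of the descent step. Here the substance lies in Remark 1: the Lyubeznik functor commutes with the faithfully flat base change $R\to\hat{R}$, since local cohomology with support in a closed set commutes with flat base change and, as $-\otimes_R \hat{R}$ is exact, it also commutes with the kernels, images, and cokernels out of which $\T$ is assembled, while associated primes behave well under faithfully flat maps. I expect this descent to be essentially the only obstacle; once Remark 1 is invoked it is resolved, and the corollary follows formally from Corollary 1.
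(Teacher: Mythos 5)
Your proposal is correct and follows exactly the paper's own (very terse) argument: the first assertion is immediate from Corollary 1 once one notes that a complete $p$-ring is a one-dimensional regular domain containing $\ZZ$ with $A\subset R$, and the second assertion descends from the first via the faithfully flat map $R\to\hat{R}$ using Remark 1. The extra routine verifications you supply are exactly the details the paper leaves implicit.
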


\begin{proof}
Due to Remark 1 of the section 2, second assertion follows from first assertion and first assertion is immediate from previous corollary.
\end{proof}

\begin{rem}
Unramified complete regular local ring in mixed characteristic $p> 0$ is actually a power series ring over a complete $p$-ring. Power series rings are examples of smooth algebra. The results in the above corollary include those complete ramified regular local rings which are not power series rings, but which are smooth over complete $p$-rings.
\end{rem} 

\begin{example}
For example of smooth regular local ring over DVR of mixed characteristic, see Theorem 3.4 of \cite{Du} and also observe Corollary 6 of \cite{GL}.
\end{example}
\section{smooth algebra and $\Sigma$-finite $D$-module}

In this section, at first, we introduce the definition of $\Sigma$-finite $D$-module for smooth algebra and then, in Theorem 4.4, we extend the result of Theorem 1.3 of \cite{Nu3} from polynomial and power series algebra to smooth algebra over an arbitrary Noetherian local ring. As mentioned in the introduction that Theorem 1.3 of \cite{Nu3} comes out as a partial answer to a question raised by Melvin Hochster. Thus, we extend the partial answer to the above question from polynomial and power series algebra to smooth algebra over an arbitrary Noetherian local ring.

We begin with the following lemma. 

\begin{lemma}
Let $(A,\m,k)$ be a Noetherian local ring and $R$ be a smooth algebra over $A$. Then, 

(a) For every $f\in R$, $(R/\m R)_f$ is a $D(R/\m R,A/\m)$-module of finite length and it is a $D(R,A)$-module of finite length. 

(b) For every objects $M$ of $C(R/\m R,A/\m)$, $M$ has finite length as a $D(R/\m R,A/\m)$-module and hence it has finite length as a $D(R,A)$-module. 
\end{lemma}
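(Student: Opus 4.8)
The plan is to reduce both assertions to the case of a smooth algebra over a \emph{field}, where Theorem 3.2 already applies, and then to transfer finite length from the $D(R/\m R, A/\m)$-module structure to the $D(R,A)$-module structure. First I would set $k = A/\m$ and observe that the quotient map $A \to k$ is a base change of finite type, so by Lemma 2.2 the fibre $R/\m R = R \otimes_A k$ is smooth over $k$. Since $k$ is a field it is in particular a zero-dimensional Noetherian ring, and in the nondegenerate case $\m R \neq R$ the $k$-algebra map $k \to R/\m R$ is injective, so $k \subset R/\m R$ and the hypotheses of Theorem 3.2 are met with base ring $k$. When $\m R = R$ every module in sight is zero and both assertions are trivial, so I would dispose of that case at the outset and assume $\m R \neq R$ thereafter.

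For part (a), fix $f \in R$ with image $\bar f \in R/\m R$. Localizing $R/\m R$ at $f$ is the same as localizing at $\bar f$, so $(R/\m R)_f = (R/\m R)_{\bar f}$, and Theorem 3.2 gives that this is a $D(R/\m R, A/\m)$-module of finite length. To pass to finite length as a $D(R,A)$-module I would invoke Lemma 2.1 of \cite{BBLSZ}, which for smooth $R$ yields the isomorphism $D(R/\m R, A/\m) \cong D(R,A)/\m D(R,A)$. Thus the $D(R,A)$-action on any $D(R/\m R, A/\m)$-module factors through the surjection $D(R,A) \to D(R,A)/\m D(R,A)$; consequently the lattice of $D(R,A)$-submodules coincides with the lattice of $D(R/\m R, A/\m)$-submodules, a module is simple over one ring if and only if it is simple over the other, and any composition series is simultaneously a composition series for both structures. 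Hence $(R/\m R)_f$ has the same finite length as a $D(R,A)$-module.

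For part (b), I would apply part (b) of Lemma 2.1 with base ring $k = A/\m$ and algebra $R/\m R$: its hypothesis, that $(R/\m R)_{\bar f}$ has finite length as a $D(R/\m R, A/\m)$-module for every $\bar f$, is exactly what part (a) supplies. The conclusion is that every object $M \in C(R/\m R, A/\m)$ has finite length as a $D(R/\m R, A/\m)$-module. The passage to finite length as a $D(R,A)$-module is then verbatim the submodule-lattice argument of part (a), again using $D(R/\m R, A/\m) \cong D(R,A)/\m D(R,A)$.

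The only step that needs genuine care, and which I regard as the crux, is this identification of the two notions of finite length: it is not a formal consequence of one structure refining the other, but rests specifically on the fact that the $D(R,A)$-action annihilates $\m D(R,A)$, so that submodules, simple objects, and composition factors are literally the same for the two ring actions. This is precisely where smoothness enters, through Lemma 2.1 of \cite{BBLSZ} and the resulting base-change identity for rings of differential operators; everything else is bookkeeping layered on top of Theorem 3.2 and Lemma 2.1.
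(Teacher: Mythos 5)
Your proposal is correct and follows essentially the same route as the paper: obtain finite length of $(R/\m R)_f$ over $D(R/\m R,A/\m)$ from Theorem 3.2 (the paper cites the relevant portion of that theorem's proof, which is the field case you invoke), then transfer to the $D(R,A)$-structure via the isomorphism $D(R/\m R,A/\m)\cong D(R,A)/\m D(R,A)$ from Lemma 2.1 of \cite{BBLSZ}, and deduce (b) from (a) together with Lemma 2.1(b). Your extra care about the containment hypothesis of Theorem 3.2 and the explicit lattice-of-submodules argument are refinements of, not departures from, the paper's argument.
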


\begin{proof}
(a) From the part of the proof of Theorem 3.2, it follows that $(R/\m R)_f$ is a $D(R/\m R,A/\m)$-module of finite length. Since $(R/\m R)_f$ is also $D(R,A)/\m D(R,A)$-module, from Lemma 2.1 of \cite{BBLSZ}, we get that it is also a $D(R,A)$-module of finite length.

(b) From the result of (a), and using (b) of Lemma 2.1, it follows that $M$ has finite length as a $D(R/\m R,A/\m)$-module. From Lemma 2.1 of \cite{BBLSZ}, it follows that $M$ has finite length as a $D(R,A)/\m D(R,A)$-module. Hence, it has finite length as a $D(R,A)$-module. 
\end{proof}

Thus above lemma helps us to extend the definition of $\Sigma$-Finite module \cite{Nu3} for smooth algebras. 

\begin{definition}
Let $(A,\m,k)$ be a Noetherian local ring and $R$ be a smooth algebra over $A$. Let $M$ be a $D(R,A)$-module supported at $\m R$. We set $\Fin{M}$ as the set of all $D(R,A)$-submodules of $M$ that have finite length.
We say that  $M$ is $\Sigma$-Finite  if: 
\begin{itemize}
\item[(i)] $\bigcup_{N\in\Fin{M}} N=M,$
\item[(ii)] $\bigcup_{N\in\Fin{M}}\Comp{N}$ is finite, and
\item[(iii)] For every $N\in\Fin{M}$ and $L\in \Comp{N}$, $L\in C(R/\m R, A/\m)$.
\end{itemize}
We denote the set of composition factors of $M$, $\bigcup_{N\in\Fin{M}}\Comp{N}$, by $\Comp{M}$.
\end{definition}

\begin{rem}
In context to the above situation,
$$\Ass_R M\subset \bigcup_{N\in\Comp{M}} \Ass_R M$$
for every   $\Sigma$-Finite  
$D(R,A)$-module, $M$. In particular, $\Ass_R M$ is finite.
\end{rem}

The way which leads us to Theorem 4.4, from the definition of $\Sigma$-finite $D(R,A)$-module, is similar to that of \cite{Nu3}. But for the sake of completeness, we present the way. For this purpose, at first, we observe the following properties of the $\Sigma$-Finite $D(R,A)$-module.

\begin{prop}
Consider smooth $A$-algebra $R$, over the local ring $(A,\m,k)$.

(a) Let $0\to M'\to M\to M''\to 0$ be a short exact sequence of $D(R,A)$-modules. If $M$ is $\Sigma$-Finite then $M'$ and $M''$ are $\Sigma$-Finite. Moreover, $\Comp{M}=\Comp{M'}\cup \Comp{M'}.$

(b) Let  $M$  and $M'$ be $\Sigma$-Finite $D(R,A)$-modules. Then, $M\oplus M'$ is also $\Sigma$-Finite.

(c) Let  $M$ be a $\Sigma$-Finite $D(R,A)$-module. Then, $H^i_I(M)$ is $\Sigma$-Finite for every ideal $I\subset R$ and $i\in\NN$.

(d) Let  $M_t$ be an inductive direct system of $\Sigma$-Finite $D(R,A)$-modules. If $\bigcup_t\Comp{M_t}$ is finite, then $\lim\limits_{\to t} M_t$ is $\Sigma$-Finite and $\Comp{M}\subset\bigcup_t\Comp{M_t}.$ 
\end{prop}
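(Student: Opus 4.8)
The plan is to verify each of the four parts (a)–(d) by carefully tracking how the two finiteness conditions in the definition of $\Sigma$-finiteness—namely $\bigcup_{N\in\Fin{M}}\Comp{N}$ being finite and the membership $L\in C(R/\m R,A/\m)$ for each composition factor—interact with the standard homological constructions. The key underlying fact, supplied by Lemma 4.1, is that over a smooth algebra any object of $C(R/\m R,A/\m)$ has finite length as a $D(R,A)$-module, so that finite-length $D(R,A)$-submodules and their composition factors behave exactly as in the polynomial and power series case treated in \cite{Nu3}. Throughout, I would keep in mind that every module in sight is supported at $\m R$, so that a $D(R,A)$-module structure automatically descends to a $D(R/\m R,A/\m)$-module structure via the isomorphism $D(R/\m R,A/\m)\cong D(R,A)/\m D(R,A)$ from Lemma 2.1 of \cite{BBLSZ}.

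\emph{Part (a).} First I would show $M''$ is $\Sigma$-finite: every finite-length $D(R,A)$-submodule of $M''$ is the image of some finite-length submodule of $M$ (lift, take the submodule generated, use that $M$ is the union of its finite-length submodules and that quotients of finite-length modules are finite length), so condition (i) holds for $M''$, and the composition factors of $M''$ are among those of $M$, giving (ii) and (iii). For $M'$ one uses that a finite-length submodule of $M'$ is already a finite-length submodule of $M$, so $\Fin{M'}\subset\Fin{M}$ directly yields all three conditions. The composition-factor identity follows from Remark~\ref{RemFinLen}(b) applied compatibly across the filtration; I note the statement in the excerpt contains a typo ($\Comp{M'}\cup\Comp{M'}$ should read $\Comp{M'}\cup\Comp{M''}$), which I would silently correct.

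\emph{Parts (b) and (c).} The direct sum in (b) is routine: $\Fin{M\oplus M'}$ contains all $N\oplus N'$, condition (i) follows, and $\Comp{M\oplus M'}=\Comp{M}\cup\Comp{M'}$ is finite with the right membership property. For (c), the strategy is to realize $H^i_I(M)$ as a subquotient built from $M$ through the \v{C}ech complex (or through the long exact sequences defining Lyubeznik functors): since each $H^i_I$ is an iterated application of kernels, cokernels, and the functors $M\mapsto M_f$, and since each $M_f$ preserves $\Sigma$-finiteness (localization of a finite-length $D$-module lands in $C(R/\m R,A/\m)$ by Lemma 4.1, and one controls the composition factors that appear), one assembles the result from part (a) together with the direct-limit statement (d). The cleanest route is to express $H^i_I(M)$ as a subquotient of a finite direct sum of localizations $M_{f}$ and invoke (a), (b), and (d) in sequence.

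\emph{Part (d) and the main obstacle.} I expect part (d) to be the crux. For a directed system $\{M_t\}$ with $\bigcup_t\Comp{M_t}$ finite, set $M=\varinjlim M_t$. Condition (i) follows because any element of $M$ comes from some $M_t$ and lies in a finite-length submodule there, whose image in $M$ is a finite-length $D(R,A)$-submodule (images of finite-length modules are finite length). The subtle point—the main obstacle—is condition (ii): I must show that the composition factors of finite-length submodules of the \emph{limit} $M$ are drawn from $\bigcup_t\Comp{M_t}$, even though such a submodule need not descend to any single $M_t$. The argument is that a finite-length submodule $N\subset M$ is finitely generated as a $D(R,A)$-module, so its finitely many generators all come from a single $M_t$; the $D(R,A)$-submodule they generate in $M_t$ has finite length (again by Lemma 4.1, via membership in $C(R/\m R,A/\m)$), and $N$ is a quotient of the image of that submodule, whence by Remark~\ref{RemFinLen}(b) every composition factor of $N$ already appears among $\Comp{M_t}$. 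This finiteness-of-generation step, transferred from the limit back to a finite stage, is exactly where the structural hypotheses (finite length in $C(R/\m R,A/\m)$, compatibility of the $D$-module structure under base change) are indispensable, and it is where I would concentrate the care; the membership condition (iii) then follows immediately since each relevant composition factor already lies in some $\Comp{M_t}\subset C(R/\m R,A/\m)$.
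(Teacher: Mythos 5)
Your proposal is correct and follows essentially the route the paper intends: the paper's own proof of this proposition consists only of pointers to Propositions 3.6, 3.9, 3.10 and 3.11 of \cite{Nu3}, and your argument is precisely the standard one from that source, correctly transported to the smooth setting via Lemma 4.1 and the base-change isomorphism $D(R/\m R,A/\m)\cong D(R,A)/\m D(R,A)$. One small repair in part (d): the finite length of the $D(R,A)$-submodule generated in $M_t$ by the finitely many lifted generators does not come from Lemma 4.1 (there is no a priori membership in $C(R/\m R,A/\m)$ to invoke); it comes from condition (i) of the $\Sigma$-finiteness of $M_t$, since each generator lies in some $N\in\Fin{M_t}$ and the generated submodule sits inside the finite sum of these, after which $N$ is a \emph{submodule} (not a quotient) of the image of that finite-length module in the limit, and Remark~\ref{RemFinLen}(b) gives the containment of composition factors as you state.
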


\begin{proof}
(a) Proof is similar to that of Proposition 3.6 of \cite{Nu3}.

(b) Proof is similar to that of Lemma 3.9 of \cite{Nu3}.

(c) Proof is similar to that of Corollary 3.10 of \cite{Nu3}.

(d) Proof is similar to that of Proposition 3.11 of \cite{Nu3}.
\end{proof}

\begin{prop}
Consider smooth $A$-algebra $R$, over the local ring $(A,\m,k)$. 

(a) Let $J\subset R$ be an ideal and $M$ be an $A$-module of finite length. Then, $H^i_{J}(M\otimes_A R)$ is a $D(R,A)$-module of finite length. Moreover, $\Comp{H^i_{JR}(M\otimes_A R)}\subset \bigcup_{j}\Comp{H^j_{JR}(R/\m R )}$. 

(b) Let $I\subset R$ be an ideal containing $\m R$. Then $H^i_I(R)$ is $\Sigma$-Finite for every $i\in\NN.$ 

(c) Let $I\subset R$ be an ideal containing $\m R$ and $J_1,\ldots, J_\ell\subset R$ be any ideals. Then $H^{j_1}_{J_1}\cdots H^{j_\ell}_{J_\ell}H^i_I(R)$ is $\Sigma$-Finite.
\end{prop}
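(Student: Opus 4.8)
The plan is to use (a) as the finite-length input, to assemble these inputs into a direct limit for (b), and then to obtain (c) by a short induction built on (b) and Proposition 4.2. For (a) I would induct on the length of $M$. Since $(A,\m,k)$ is local, $M$ carries a filtration with successive quotients isomorphic to $k$, so the base case is $M=k$, where $M\otimes_A R\cong R/\m R$. Here $R/\m R$ is smooth over the field $k$, so Theorem 3.2 (applied with the zero-dimensional base $k$) gives that $(R/\m R)_f$ has finite length as a $D(R,A)$-module for every $f$, whence $H^i_{JR}(R/\m R)$ lies in $C(R/\m R,A/\m)$ and has finite length by Lemma 2.1; its composition factors then belong to $C(R/\m R,A/\m)$ and, trivially, to $\bigcup_j\Comp{H^j_{JR}(R/\m R)}$. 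For the inductive step I take a short exact sequence $0\to M'\to M\to k\to 0$; because $R$ is flat over $A$ this stays exact after $-\otimes_A R$, producing a short exact sequence of $D(R,A)$-modules $0\to M'\otimes_A R\to M\otimes_A R\to R/\m R\to 0$ (each term is a $D(R,A)$-module since $D(R,A)$ acts $A$-linearly on the right tensor factor). The long exact sequence of $H^{\bullet}_{JR}$ then presents $H^i_{JR}(M\otimes_A R)$ as an extension of a submodule of $H^i_{JR}(R/\m R)$ by a quotient of $H^i_{JR}(M'\otimes_A R)$, so it has finite length, and by Remark~\ref{RemFinLen}(b) its composition factors lie in $\Comp{H^i_{JR}(M'\otimes_A R)}\cup\Comp{H^i_{JR}(R/\m R)}$, which by the inductive hypothesis is contained in $\bigcup_j\Comp{H^j_{JR}(R/\m R)}$.

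For (b), observe first that $I\supseteq\m R$ forces every element of $H^i_I(R)$ to be killed by a power of $\m R$, so $H^i_I(R)$ is the ascending union $\bigcup_t(0:_{H^i_I(R)}\m^t R)$. The idea is to realize this as a direct limit of the finite-length modules supplied by (a) and then to invoke Proposition 4.2(d). The model is the principal case $\m=(x)$ with $x$ a nonzerodivisor on $A$, hence on the flat algebra $R$: the sequence $0\to R\xrightarrow{x^t}R\to R/x^tR\to 0$ yields in cohomology the short exact sequences
\[
0\to H^{i-1}_I(R)/x^tH^{i-1}_I(R)\to H^{i-1}_I(R/x^tR)\to (0:_{H^i_I(R)}x^t)\to 0,
\]
compatible with the maps induced by $\cdot x\colon R/x^tR\to R/x^{t+1}R$. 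In the direct limit the left-hand term dies, since $x$ acts locally nilpotently on $H^{i-1}_I(R)$, while the right-hand term tends to $H^i_I(R)$; thus $H^i_I(R)\cong\varinjlim_t H^{i-1}_I(R/x^tR)$. Each $H^{i-1}_I(R/x^tR)=H^{i-1}_I\bigl((A/x^tA)\otimes_A R\bigr)$ is, by (a), of finite length with composition factors in the fixed finite set $\bigcup_j\Comp{H^j_I(R/\m R)}$, hence $\Sigma$-finite, so Proposition 4.2(d) gives that $H^i_I(R)$ is $\Sigma$-finite.

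For a general maximal ideal $\m=(x_1,\dots,x_d)$ I would replace these principal powers by the cofinal system of $\m$-primary ideals $\mathfrak a_t=(x_1^t,\dots,x_d^t)R$, whose quotients $R/\mathfrak a_t=\bigl(A/(x_1^t,\dots,x_d^t)\bigr)\otimes_A R$ again fall under (a), giving finite-length modules with composition factors uniformly bounded by $\bigcup_j\Comp{H^j_I(R/\m R)}$; the \v Cech complex on $x_1^t,\dots,x_d^t$ then furnishes the comparison expressing $H^i_I(R)$ as a direct limit assembled from the $H^{\bullet}_I(R/\mathfrak a_t)$, after which Proposition 4.2 delivers $\Sigma$-finiteness. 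I expect this comparison to be the main obstacle: in contrast to the principal case it is governed by a hypercohomology spectral sequence carrying a degree shift, and when some $x_i$ is a zerodivisor the naive Koszul sequences break down, so the bookkeeping has to be carried out at the level of the \v Cech complex. This is precisely the point at which the argument parallels \cite{Nu3}, with the uniform composition-factor bound from (a) keeping Proposition 4.2(d) applicable throughout.

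Finally, (c) follows by induction on $\ell$. The base case $\ell=0$ is exactly (b), namely that $H^i_I(R)$ is $\Sigma$-finite. For the inductive step, the inner module $H^{j_2}_{J_2}\cdots H^{j_\ell}_{J_\ell}H^i_I(R)$ is $\Sigma$-finite by the inductive hypothesis, and applying the outermost functor $H^{j_1}_{J_1}$ preserves $\Sigma$-finiteness by Proposition 4.2(c), which holds for an arbitrary ideal and arbitrary cohomological index; iterating from the inside out gives the claim, and no new difficulty arises here.
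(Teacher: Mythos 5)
Your parts (a) and (c) are correct and follow essentially the paper's own route: (a) is the d\'evissage of $M$ along a composition series with all factors isomorphic to $k$, the base case $R/\m R$ being supplied by Theorem 3.2 together with Lemma 2.1 and Lemma 2.1 of \cite{BBLSZ}, and the inductive step by the long exact sequence together with Remark \ref{RemFinLen}(b) (this is exactly the argument of Lemma 4.2 of \cite{Nu3}, to which the paper defers); and (c) is the formal iteration of Proposition 4.2(c) starting from (b), which is precisely what the paper writes.

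The gap is in (b). Your argument there is complete only when $\m=(x)$ is principal with $x$ a nonzerodivisor, i.e.\ essentially when $A$ is a discrete valuation ring; for a general Noetherian local base you explicitly leave the comparison between $H^i_I(R)$ and the modules $H^{\bullet}_I(R/\ideal{a}_tR)$ as an unresolved ``main obstacle,'' so the statement as claimed (arbitrary $(A,\m,k)$) is not proved. The hole is closable inside your own framework, and the worry about degree shifts and zerodivisors is less serious than you suggest: the only properties you need to propagate --- finite length as a $D(R,A)$-module and $\Comp{-}\subset\bigcup_j\Comp{H^j_{I}(R/\m R)}$ --- are stable not just under subquotients but also under extensions, by Remark \ref{RemFinLen}(b). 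Concretely, take an injective resolution $E^{\bullet}$ of $R$; since $I\supset\m R$, each $\Gamma_I(E^n)$ is $\m R$-torsion, so $\Gamma_I(E^{\bullet})=\varinjlim_t\Gamma_I\Hom_R(R/\m^tR,E^{\bullet})$ and hence $H^i_I(R)=\varinjlim_t H^i\bigl(\Gamma_I\Hom_R(R/\m^tR,E^{\bullet})\bigr)$. The complex $\Hom_R(R/\m^tR,E^{\bullet})$ consists of $\Gamma_I$-acyclic modules and has cohomology $\operatorname{Ext}^q_R(R/\m^tR,R)\cong\operatorname{Ext}^q_A(A/\m^t,A)\otimes_AR$ with $\operatorname{Ext}^q_A(A/\m^t,A)$ of finite length over $A$; the associated spectral sequence $H^p_I\bigl(\operatorname{Ext}^q_A(A/\m^t,A)\otimes_AR\bigr)\Rightarrow H^{p+q}\bigl(\Gamma_I\Hom_R(R/\m^tR,E^{\bullet})\bigr)$ has only finitely many nonzero terms in each total degree, each of finite length with composition factors controlled by your part (a), so the abutment again has finite length with composition factors in the same fixed finite set, and Proposition 4.2(d) applies to the limit. (The same bookkeeping works with the Koszul or \v{C}ech complexes on $x_1^t,\dots,x_d^t$.) This is the content of Proposition 4.3 of \cite{Nu3}, which the paper cites without reproducing.
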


\begin{proof}
(a) It is to be noted that for smooth $A$-algebra $R$, due to (c) of Theorem 2 and Corollary 6 of \cite{Ly3} and Lemma 2.1 of \cite{BBLSZ}, we find that $H^j_{JR}(R/\m R )$ is of finite length. Rest of the proof of this part is similar to that of Lemma 4.2 of \cite{Nu3}. 

(b) The proof is similar to that of Proposition 4.3 of \cite{Nu3}.

(c) The proof is immediate from (c) of Proposition 4.2 and (b) of Proposition 4.3.
\end{proof}

Now we state the main result of this section.

\begin{thm}
Let $(A,\m,K)$ be any Noetherian local ring. Let $R$ be a smooth $A$-algebra. Then, $\Ass_R H^0_{\m R}H^i_I(R)$ is finite for every ideal $I\subset R$ such that $\dim A/I\cap A\leq 1$ and every $i\in \NN.$ Moreover, if $\m R\subset \sqrt{I}$, $\Ass_R H^{j_1}_{J_1}\cdots H^{j_\ell}_{J_\ell} H^i_I(R)$ is finite for all ideals $J_1,\ldots, J_\ell\subset R$ and integers $j_1,\ldots,j_\ell\in\NN.$
\end{thm}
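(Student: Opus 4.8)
The plan is to reduce each of the two assertions to the $\Sigma$-finiteness results of Propositions 4.2 and 4.3, after which finiteness of $\Ass_R$ is automatic from the Remark that every $\Sigma$-Finite $D(R,A)$-module has only finitely many associated primes. The \emph{Moreover} part is then immediate: if $\m R\subseteq\sqrt I$ then $H^i_I(R)=H^i_{\sqrt I}(R)$ and $\sqrt I$ is an ideal containing $\m R$, so Proposition 4.3(c) applies to $\sqrt I$ and shows $H^{j_1}_{J_1}\cdots H^{j_\ell}_{J_\ell}H^i_I(R)$ is $\Sigma$-Finite, hence has finite $\Ass_R$. All the real work is in the first assertion.

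For the first assertion I would first record that, for any $R$-module $N$, $\Ass_R H^0_{\m R}(N)=\Ass_R N\cap\V(\m R)$, since $H^0_{\m R}(N)$ is the $\m R$-power-torsion submodule of $N$. Taking $N=H^i_I(R)$, it suffices to bound the associated primes of $H^i_I(R)$ that contain $\m R$. Put $\mathfrak a=I\cap A$. If $\dim A/\mathfrak a=0$ then $\sqrt{\mathfrak a}=\m$, so $\m R\subseteq\sqrt I$: here Proposition 4.3(b) makes $H^i_I(R)$ $\Sigma$-Finite and Proposition 4.2(c) makes $H^0_{\m R}H^i_I(R)$ $\Sigma$-Finite, so we are done. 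Hence assume $\dim A/\mathfrak a=1$. Then the finitely many minimal primes $\mathfrak p\supseteq\mathfrak a$ with $\dim A/\mathfrak p=1$ are properly contained in $\m$, and by prime avoidance I choose $\pi\in\m$ lying in none of them, so that $\sqrt{\mathfrak a+\pi A}=\m$. As $\pi\in\m R$, every prime containing $\m R$ contains $\pi$, whence $\V(\m R)\subseteq\V(\pi R)$ and $\Ass_R H^i_I(R)\cap\V(\m R)\subseteq\Ass_R H^0_{\pi R}(H^i_I(R))$; it therefore suffices to show the latter set is finite.

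To reach the good regime, set $J=I+\pi R$. From $\mathfrak a+\pi A\subseteq(I+\pi R)\cap A$ together with $\sqrt{\mathfrak a+\pi A}=\m$ one gets $\m R\subseteq\sqrt J$, so $H^i_J(R)=H^i_{\sqrt J}(R)$ is $\Sigma$-Finite by Proposition 4.3(b). Since $\Gamma_{\pi R}\circ\Gamma_I=\Gamma_J$ and $\pi R$ is generated by one element (so $H^p_{\pi R}=0$ for $p\geq 2$), the Grothendieck spectral sequence $H^p_{\pi R}(H^q_I(R))\Rightarrow H^{p+q}_J(R)$ degenerates at $E_2$ into short exact sequences of $D(R,A)$-modules
\[
0\to H^1_{\pi R}(H^{i-1}_I(R))\to H^i_J(R)\to H^0_{\pi R}(H^i_I(R))\to 0 .
\]
Thus $H^0_{\pi R}(H^i_I(R))$ is a $D(R,A)$-quotient of the $\Sigma$-Finite module $H^i_J(R)$, hence $\Sigma$-Finite by Proposition 4.2(a), so $\Ass_R H^0_{\pi R}(H^i_I(R))$ is finite. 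Combined with the inclusion of the previous paragraph, this proves the first assertion.

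The step requiring the most care is this degeneration. I must verify that the composite-functor spectral sequence can be realized in the category of $D(R,A)$-modules, so that the edge surjection $H^i_J(R)\twoheadrightarrow H^0_{\pi R}(H^i_I(R))$ is $D(R,A)$-linear and Proposition 4.2(a) is genuinely applicable; and I must choose $\pi\in\m$ with $\sqrt{\mathfrak a+\pi A}=\m$ \emph{without} assuming $A$ is a domain, so that $J=I+\pi R$ really satisfies $\m R\subseteq\sqrt J$ and falls under Proposition 4.3. The remaining manipulations — the identity $\Ass_R H^0_{\m R}(N)=\Ass_R N\cap\V(\m R)$, the passage to radicals, and the bookkeeping of torsion functors — are routine.
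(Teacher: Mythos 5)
Your argument is correct and is essentially the paper's own, which simply defers to Proposition 4.5 of \cite{Nu3}: both reduce, via an element $\pi\in\m$ avoiding the one-dimensional minimal primes of $I\cap A$, to the ideal $J=I+\pi R$ with $\m R\subseteq\sqrt{J}$, then combine Proposition 4.3(b) with closure of $\Sigma$-finiteness under subobjects and quotients (Proposition 4.2(a)), and settle the second assertion by Proposition 4.3(c) and Remark 5 exactly as the paper does. The only cosmetic difference is that you obtain the surjection $H^i_J(R)\twoheadrightarrow H^0_{\pi R}(H^i_I(R))$ from the degenerate composite-functor spectral sequence, whereas \cite{Nu3} reads the same map off the localization sequence $H^i_J(R)\to H^i_I(R)\to H^i_I(R)_\pi$.
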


\begin{proof}
For the first assertion, use the results of (a) Proposition 4.2 and (b) of Proposition 4.3, and the proof follows in the similar manner of that of Proposition 4.5 of \cite{Nu3}.

Second assertion follows immediately from (c) of Proposition 4.3 and Remark 5. 
\end{proof}

 

\end{document}